\documentclass{amsart}


\usepackage{amssymb,amsfonts,amsxtra}
\usepackage{paralist}
\usepackage{dsfont,mathrsfs}
\renewcommand{\mathcal}{\mathscr}
\usepackage[all]{xypic}
\xyoption{dvips}

\theoremstyle{definition}
\newtheorem{ntn}{Notation}

\theoremstyle{plain}
\newtheorem{lem}[ntn]{Lemma}

\newtheorem{thm}[ntn]{Theorem}
\newtheorem{cor}[ntn]{Corollary}

\theoremstyle{remark}

\newtheorem{rmk}[ntn]{Remark}

\newcommand{\calO}{\mathcal{O}}
\newcommand{\CC}{\mathds{C}}
\newcommand{\gl}{\mathfrak{gl}}

\newcommand{\ideal}[1]{{\left\langle#1\right\rangle}}
\newcommand{\into}{\hookrightarrow}

\newcommand{\mm}{\mathfrak{m}}

\newcommand{\onto}{\twoheadrightarrow}
\newcommand{\p}{\partial}
\renewcommand{\sl}{\mathfrak{sl}}
\renewcommand{\ss}{\mathfrak{s}}

\DeclareMathOperator{\Der}{Der}
\DeclareMathOperator{\edim}{embdim}

\DeclareMathOperator{\ord}{ord}

\begin{document}

\title[The Lie algebra of derivations of a fat point]
{A solvability criterion for the Lie algebra of derivations of a fat point}

\author{Mathias Schulze}
\address{
M.~Schulze\\
Oklahoma State University\\
401 Mathematical Sciences\\
Stillwater, OK 74078\\
USA}
\email{mschulze@math.okstate.edu}
\thanks{The author was supported by the College of Arts \& Sciences at Oklahoma State University through a FY 2010 ASR+1 award.}

\date{\today}

\subjclass{32S10, 17B30}

\keywords{fat point, derivation, Lie algebra, solvability, hypersurface singularity, Yau algebra}

\begin{abstract}
We consider the Lie algebra of derivations of a zero dimensional local complex algebra.
We describe an inequality involving the embedding dimension, the order, and the first deviation that forces this Lie algebra to be solvable.
Our result was motivated by and generalizes the solvability of the Yau algebra of an isolated hypersurface singularity.
\end{abstract}

\maketitle

\section{Introduction}

Let $f\in\mm=\ideal{x_1,\dots,x_n}\subseteq\CC\{x_1,\dots,x_n\}=\calO$ define an isolated hypersurface singularity $X=\{f=0\}\subseteq(\CC^n,0)$.
The \emph{Tyurina algebra} of $X$ is the finite $\CC$-algebra $A(X)=\calO/\ideal{f,J(f)}$ where $J(f)=\ideal{\p f/\p x_1,\dots,\p f/\p x_n}$.
By a result of Mather and Yau \cite{MY82}, $A(X)$ determines the analytic isomorphism class of $X$.
The \emph{Yau algebra} $L(X)=\Der_\CC A(X)$ is the Lie algebra of derivations of $A(X)$.
Its structure and the interplay of its properties with those of $X$ do not seem to be understood in general.
For instance, there is the \emph{recognition problem} asking what Lie algebras can arise as $L(X)$, the \emph{recognition problem} asking what information on $X$ can be restored from $L(X)$, and in particular the \emph{classification problem} asking up to what extend $L(X)$ determines the isomorphism class of $A(X)$ and hence of $X$. 
In the case of simple singularities, the classification problem has been studied by Elashvili and Khimshiashvili \cite{EK06}.

An important result on the recognition problem was formulated by Yau~\cite[Thm.~2]{Yau91}: $L(X)$ is a solvable complex Lie algebra. 
The purpose of this note is to prove the following generalization of this result in which we replace $J$ by a general zero dimensional ideal.
Our approach is inspired by the result of M\"uller \cite[Hilfssatz~2]{Mue86} that any ideal of an analytic algebra invariant under a reductive algebraic group is minimally generated by an invariant vector space.

\begin{thm}\label{1}
Let $S$ be a zero-dimensional local $\CC$-algebra of embedding dimension $\edim(S)$ and order $\ord(S)$, and denote by $\varepsilon_1(S)$ its first deviation.
Then the Lie algebra $\Der_\CC S$ is solvable if $\varepsilon_1(S)+1<\edim(S)+\ord(S)$.
\end{thm}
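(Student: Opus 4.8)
The plan is to argue by contradiction: assuming $\Der_\CC S$ is not solvable, we shall see that an $\sl_2$ acts on $S$, and that Müller's principle then forces $\varepsilon_1(S)\ge\edim(S)+\ord(S)-1$, contrary to the hypothesis. Throughout write $e:=\edim(S)$, $o:=\ord(S)$, and let $V(m)$ denote the irreducible $\sl_2$-module of highest weight $m$.

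\emph{Step 1: producing an $\mathrm{SL}_2$-action.} Since $S$ is a finite-dimensional $\CC$-algebra, $\Aut_\CC S$ is an algebraic group with Lie algebra $\Der_\CC S$. If $\Der_\CC S$ is not solvable then $\Aut_\CC(S)^\circ$ is not solvable, so by the Levi decomposition it contains a connected subgroup with Lie algebra $\sl_2$, necessarily isomorphic to $\mathrm{SL}_2$ or to $\mathrm{PGL}_2$; composing with $\mathrm{SL}_2\onto\mathrm{PGL}_2$ if needed, we obtain a nontrivial action of $\mathrm{SL}_2$ on $S$ by $\CC$-algebra automorphisms whose differential $\sl_2\to\Der_\CC S$ is injective (as $\sl_2$ is simple). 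This action fixes the maximal ideal $\mm$, hence each power $\mm^k$, so $W:=\mm/\mm^2$ is an $\mathrm{SL}_2$-module with $\dim_\CC W=e$.

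\emph{Step 2: an equivariant presentation and Müller's lemma.} By complete reducibility choose an $\mathrm{SL}_2$-stable complement to $\mm^2$ in $\mm$; this gives an $\mathrm{SL}_2$-equivariant surjection $P:=\CC\{x_1,\dots,x_e\}\onto S$ (with $\mathrm{SL}_2$ acting linearly on $\langle x_1,\dots,x_e\rangle\cong W$) whose kernel $I\subseteq\mm_P^2$ is an $\mathrm{SL}_2$-invariant $\mm_P$-primary ideal, and then $\varepsilon_1(S)=\dim_\CC I/\mm_PI=\mu_P(I)$, the minimal number of defining relations of $S$. Because $\mathrm{SL}_2$ is reductive, Müller's lemma (the principle recalled in the introduction) produces an $\mathrm{SL}_2$-submodule $E\subseteq I$ mapping isomorphically onto $I/\mm_PI$; thus $E$ generates $I$ and $\dim_\CC E=\varepsilon_1(S)$. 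Since $o=\ord(I)\ge 2$ and the order of a sum is at least the minimum of the orders of its summands, we have $E\subseteq\mm_P^o$ and some element of $E$ has order exactly $o$; consequently the degree-$o$ leading forms of the elements of $E$ span a nonzero $\mathrm{SL}_2$-submodule of $\mathrm{Sym}^oW$, and more precisely the leading-form map identifies $E$, as a graded $\mathrm{SL}_2$-module, with a submodule of $\bigoplus_{d\ge o}\mathrm{Sym}^dW$ of the same dimension $\varepsilon_1(S)$.

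\emph{Step 3: the key estimate — the main obstacle.} It remains to prove
\[
\dim_\CC E\ \ge\ e+o-1 ,
\]
which contradicts $\varepsilon_1(S)+1<e+o$ and finishes the argument. Three ingredients must interact: $E$ lives in degrees $\ge o$ with a nonzero part in degree $o$; $(E)=I$ is $\mm_P$-primary, so $\dim_\CC E\ge e$ already by Krull's height theorem; and $E$ is $\mathrm{SL}_2$-invariant. The delicate point, which is where I expect the real work to be, is to extract the remaining $o-1$ generators: it is \emph{not} enough to bound the degree-$o$ component of $E$ alone, since an $\mathrm{SL}_2$-invariant ideal can have a one-dimensional part in degree $o$ (generated by an invariant form) while still needing many higher-degree generators to become $\mm_P$-primary. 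The structure that should make the bound go through is that, with $h\ge 1$ the top weight occurring in $W$ and $v_0\in W$ a highest weight vector, the least power $v_0^N$ lying in $I$ satisfies $N\ge o$ and generates an irreducible submodule $V(Nh)\subseteq I$ which the generators $E$ must account for, while simultaneously $(E)$ has to cut out only the origin; one then estimates $\dim_\CC E$ by following the $\mathrm{SL}_2$-isotypic types along the order filtration of $E$. The estimate is sharp: equality holds when $W\cong V(1)\oplus V(0)^{\oplus(e-2)}$, say with basis $x_1,x_2,t_1,\dots,t_{e-2}$ and $\mathrm{gr}_\mm S=\CC[W]\big/\bigl((x_1,x_2)^o+(t_1^o,\dots,t_{e-2}^o)\bigr)$, and I would use this configuration as a guide; a possible alternative to working with $E$ directly is to pass to the homogeneous $\mathrm{SL}_2$-invariant ideal $\operatorname{in}(I)$ defining $\mathrm{gr}_\mm S$, where the bookkeeping between Hilbert function and number of generators is more transparent, provided one can control $\varepsilon_1(S)$ against the generators of $\operatorname{in}(I)$.
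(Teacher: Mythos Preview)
Your Steps 1 and 2 match the paper's setup in spirit (the paper truncates to $\Bar R=R/\mm^{\ell+1}$ and works with the Lie algebra $\sl_2\hookrightarrow\Der_{\Bar I}\Bar R$ directly rather than passing through the algebraic group $\Aut_\CC S$, but the outcome is the same: a linear $\sl_2$-action on the variables and an $\sl_2$-stable minimal generating space $E\cong I/\mm I$). Your Step~3, however, is not a proof: you correctly identify the target inequality $\dim_\CC E\ge e+o-1$ and correctly note that neither Krull's bound $\dim_\CC E\ge e$ nor a single highest-weight vector $v_0^N\in I$ suffices, but you do not supply the missing mechanism. In particular, $v_0^N$ need not lie in $E$ at all (its class in $I/\mm_P I$ may vanish), so that line does not get started.

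The paper's device for Step~3 is this. Decompose $W$ into irreducibles and let $\bar x_1,\dots,\bar x_k$ be the highest weight vectors, of weights $m_1\ge\cdots\ge m_k$; complete to an $H$-eigenbasis $\bar x_1,\dots,\bar x_n$. Because $I$ is $\mm$-primary, for each $i\le k$ the image of $I$ in $\CC[\![x_1,\dots,x_i]\!]$ needs at least $i$ generators; this lets one pick $H$-homogeneous elements $\bar f_1,\dots,\bar f_k\in E$ such that $\bar f_1,\dots,\bar f_i$ remain linearly independent after setting $\bar x_{i+1},\dots,\bar x_n$ to zero. Now set only the non-highest-weight variables $\bar x_{k+1},\dots,\bar x_n$ to zero: the resulting $\bar g_i\in\CC[\bar x_1,\dots,\bar x_k]$ are linearly independent highest weight vectors, and since each $\bar f_i$ has a surviving monomial in $\bar x_1,\dots,\bar x_i$ of degree $\ge o$, the weight of $\bar g_i$ is at least $o\cdot m_i$. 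Linearly independent highest weight vectors generate a direct sum of irreducibles, so the $\ss$-module they generate has dimension at least $\sum_{i=1}^k(o\,m_i+1)=(o-1)\sum m_i+n$; a projection argument shows this is a quotient of an $\ss$-submodule of $E$, whence $\dim_\CC E\ge (o-1)\sum m_i+n$. Nontriviality of the $\ss$-action forces some $m_i\ge 1$, giving $\dim_\CC E\ge n+o-1$. The point you were missing is that one must produce \emph{one highest weight generator per irreducible summand of $W$}, using zero-dimensionality of $I$ restricted to the successive coordinate subrings, and then read off the weight lower bound from the order $o$.
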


Recall that, by definition, $\varepsilon_1(S)=\dim_\CC H_1(S)$ where $H_\bullet(S)$ is the Koszul algebra of $S$.
More explicitly, Theorem~\ref{1} applies to $S=R/I$ where $R=\CC[\![x_1,\dots,x_n]\!]$ and $I\subseteq R$ is a zero dimensional ideal with $I\subseteq\mm^m$ where $\mm=\ideal{x_1,\dots,x_n}$ and $m\ge2$ is chosen maximal.
Then $n=\edim(S)$, $m=\ord(S)$, and $\varepsilon_1(S)=\dim_\CC(I/\mm I)$ is the minimal number of generators of $I$ \cite[Thm.~2.3.2.(b)]{BH93}.

By \cite[Thm.~2.3.3.(b)]{BH93}, $S$ being a complete intersection is equivalent to $\varepsilon_1(S)=\edim(S)-\dim(S)$.
In this case, the inequality in Theorem~\ref{1} reduces to $1<\ord(S)$ which holds trivially unless $S=\CC$.

\begin{cor}
If $S$ is a zero-dimensional complex complete intersection then $\Der_\CC S$ is a solvable Lie algebra.
\end{cor}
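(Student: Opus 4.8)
The plan is to read the statement off directly from Theorem~\ref{1}, after rewriting the complete intersection hypothesis as a condition on the first deviation. I would first set aside the degenerate case $\edim(S)=0$: then $S=\CC$ is regular, $\Der_\CC S=0$, and the claim is trivial, so from now on I assume $\edim(S)\ge1$ (with $S$ local, as throughout the paper).

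Next, recall from \cite[Thm.~2.3.3.(b)]{BH93} that for the zero-dimensional algebra $S$, being a complete intersection is equivalent to $\varepsilon_1(S)=\edim(S)-\dim(S)$, hence to $\varepsilon_1(S)=\edim(S)$. Substituting this into the inequality of Theorem~\ref{1}, the hypothesis $\varepsilon_1(S)+1<\edim(S)+\ord(S)$ reduces to $\ord(S)\ge2$.

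It therefore remains only to verify $\ord(S)\ge2$. I would take a minimal presentation $S=R/I$ with $R=\CC[\![x_1,\dots,x_n]\!]$, $n=\edim(S)\ge1$, and $\mm=\ideal{x_1,\dots,x_n}$. Minimality forces $I\subseteq\mm^2$, while $I\ne0$ because $\dim S=0<\dim R$; hence by the Krull intersection theorem $I\not\subseteq\mm^m$ for $m$ large, so the set of $m$ with $I\subseteq\mm^m$ is nonempty, contains $2$, and is bounded, and its maximum $\ord(S)$ is at least $2$. Theorem~\ref{1} now applies and gives that $\Der_\CC S$ is solvable.

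I do not expect a genuine obstacle here: the whole content of the corollary lies in Theorem~\ref{1} together with the deviation-theoretic characterization of complete intersections in \cite{BH93}. The only point deserving a word of care is precisely the case $S=\CC$ set aside at the start, where $\ord(S)$ is not meaningfully defined so that Theorem~\ref{1} does not apply, and where one instead invokes $\Der_\CC\CC=0$ directly.
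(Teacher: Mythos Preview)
Your proposal is correct and follows exactly the approach the paper takes: use \cite[Thm.~2.3.3.(b)]{BH93} to rewrite the complete intersection hypothesis as $\varepsilon_1(S)=\edim(S)$, reduce the inequality of Theorem~\ref{1} to $\ord(S)\ge2$, and note this holds trivially except when $S=\CC$. Your treatment is in fact slightly more careful than the paper's, which simply asserts that $1<\ord(S)$ ``holds trivially unless $S=\CC$'' without spelling out the minimal-presentation argument.
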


This result applies in particular to the \emph{Milnor algebra} $S=\calO/J(f)$ of a a function $f\colon(\CC^n,0)\to(\CC,0)$ with isolated critical point.

\begin{cor}
If $S$ is a Milnor algebra then $\Der_\CC S$ is a solvable Lie algebra.
\end{cor}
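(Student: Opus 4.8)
The plan is to recognize the Milnor algebra as a zero-dimensional complete intersection and then invoke the preceding corollary directly. By definition $S=\calO/J(f)$ where $\calO=\CC\{x_1,\dots,x_n\}$, the ideal $J(f)=\ideal{\p f/\p x_1,\dots,\p f/\p x_n}$ is generated by the $n$ partial derivatives, and $f\colon(\CC^n,0)\to(\CC,0)$ has an isolated critical point. The corollary will follow once I establish that this particular $S$ falls within the scope of the previous corollary, namely that it is a zero-dimensional complete intersection.

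First I would record that $\calO$ is a regular local ring of dimension $n$, and in particular Cohen--Macaulay. The hypothesis that $f$ has an isolated critical point means precisely that the common zero locus of $\p f/\p x_1,\dots,\p f/\p x_n$ is the single point $0$; equivalently, $S=\calO/J(f)$ is Artinian (finite-dimensional over $\CC$) and $J(f)$ has height $n$. Thus $S$ is zero-dimensional, as required.

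Next I would upgrade this to the complete intersection property. Since $J(f)$ is generated by exactly $n=\dim\calO$ elements and has height $n$, the partial derivatives form a system of parameters in the Cohen--Macaulay ring $\calO$. In a Cohen--Macaulay local ring every system of parameters is a regular sequence, so the partials form a regular sequence and $S=\calO/J(f)$ is a zero-dimensional complete intersection. Applying the preceding corollary to $S$ then yields that $\Der_\CC S$ is solvable, which completes the argument.

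The only step requiring any care is the verification of the complete intersection property, and even this is routine: it rests on the standard characterization of regular sequences as systems of parameters in Cohen--Macaulay rings, with the isolated-critical-point hypothesis supplying exactly the zero-dimensionality that makes the $n$ partials a system of parameters. There is no genuine obstacle here; the corollary is essentially a specialization of the one immediately above it.
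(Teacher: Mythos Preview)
Your proposal is correct and matches the paper's approach exactly: the paper simply remarks, immediately before stating the corollary, that the previous result ``applies in particular to the Milnor algebra,'' leaving implicit precisely the verification you spell out---that the $n$ partials form a regular sequence in the $n$-dimensional regular (hence Cohen--Macaulay) local ring $\calO$, so that $S=\calO/J(f)$ is a zero-dimensional complete intersection.
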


Using a theorem of Kempf~\cite[Thm.~13]{Kem93} in the order $3$ case, also the solvability of Yau algebras becomes a corollary of Theorem~\ref{1}.

\begin{cor}[Yau's solvability theorem]\label{2}
The Yau algebra of any isolated hypersurface singularity is a solvable Lie algebra.
\end{cor}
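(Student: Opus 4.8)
To deduce Corollary~\ref{2} from Theorem~\ref{1}, the plan is to apply the latter directly to the Tyurina algebra $S=A(X)=\calO/\ideal{f,J(f)}$, so that $\Der_\CC S=L(X)$. Set $d=\ord(f)$. If $d\le1$ then $f$ has a nonzero linear term, $X$ is smooth at the origin, $A(X)=0$, and $L(X)=0$ is trivially solvable; so I may assume $d\ge2$. Next I would dispose of the case $d=2$ by the splitting lemma: after an analytic coordinate change one may write $f=x_1^2+\dots+x_r^2+g(x_{r+1},\dots,x_n)$ with $r$ the rank of the Hessian of $f$ and $g\in\mm^3$, and since the Tyurina algebra is unchanged under adjoining the square of a new variable, $A(X)\cong A(Y)$ for $Y=\{g=0\}$, whence $L(X)\cong L(Y)$. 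If $r=n$ the residual part $g$ is absent and $L(X)=0$; otherwise $g$ defines an isolated hypersurface singularity of order $\ge3$ in fewer variables, and it suffices to treat that germ. After renaming I may therefore assume $d=\ord(f)\ge3$.

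With $S=\calO/I$, $I=\ideal{f,\p f/\p x_1,\dots,\p f/\p x_n}$, I would read off the three invariants appearing in Theorem~\ref{1}. Every generator of $I$ lies in $\mm^2$ (as $d\ge3$), so $\edim(S)=n$. Writing $f=f_d+f_{d+1}+\cdots$ in homogeneous parts, Euler's identity $d\,f_d=\sum_i x_i\,\p f_d/\p x_i$ forces $\p f_d/\p x_i\neq0$ for some $i$, so $\p f/\p x_i$ has order exactly $d-1$; hence $I\subseteq\mm^{d-1}$ but $I\not\subseteq\mm^d$, i.e.\ $\ord(S)=d-1$. Finally the $n+1$ displayed generators of $I$ give $\varepsilon_1(S)=\dim_\CC(I/\mm I)\le n+1$. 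Consequently $\varepsilon_1(S)+1\le n+2$, whereas $\edim(S)+\ord(S)=n+d-1\ge n+3$ once $d\ge4$, so Theorem~\ref{1} applies and $L(X)$ is solvable whenever $\ord(f)\ge4$.

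The remaining case $d=3$ is, I expect, the only genuine obstacle. Here $\edim(S)+\ord(S)=n+2$, so Theorem~\ref{1} demands the sharper estimate $\varepsilon_1(S)\le n$: the $n+1$ natural generators $f,\p f/\p x_1,\dots,\p f/\p x_n$ of $I$ must fail to be a minimal generating system. This is exactly where I would invoke Kempf's theorem \cite[Thm.~13]{Kem93}: for an isolated hypersurface singularity cut out by a germ of order $3$ it yields a nontrivial $\CC$-linear relation among $f,\p f/\p x_1,\dots,\p f/\p x_n$ modulo $\mm I$, equivalently $\varepsilon_1(S)=\dim_\CC(I/\mm I)\le n=\edim(S)$, so that $\varepsilon_1(S)+1\le n+1<n+2=\edim(S)+\ord(S)$. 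Theorem~\ref{1} then gives the solvability of $L(X)$, which completes the proof of Corollary~\ref{2}. Apart from extracting this one-generator saving from Kempf's result in the cubic case, every step — the two preliminary reductions and the computation of $\edim$, $\ord$, and $\varepsilon_1$ — is routine.
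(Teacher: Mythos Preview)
Your reduction to $d=\ord(f)\ge3$ via the splitting lemma and your verification that Theorem~\ref{1} applies directly when $d\ge4$ are fine and match the paper's argument. The problem is entirely in the case $d=3$.

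The claimed inequality $\varepsilon_1(S)\le n$ is \emph{false} in general for a non-quasihomogeneous germ of order~$3$. Indeed, with $I=\ideal{f}+J(f)$ one has $\mm I\subseteq\mm^3$, and if a nontrivial relation $c_0f+\sum_i c_i\,\p f/\p x_i\in\mm I$ held with $c_0\ne0$, then $(1-b_0)f\in J(f)$ for a suitable $b_0\in\mm$, forcing $f\in J(f)$ and hence, by Saito's theorem, $f$ quasihomogeneous. If $c_0=0$, comparing degree~$2$ parts gives $\sum_i c_i\,\p f_3/\p x_i=0$, which fails whenever the leading cubic $f_3$ has $\CC$-linearly independent partials (e.g.\ $f_3=\sum x_i^3$). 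So for a generic cubic leading form together with any higher-order perturbation destroying quasihomogeneity, one gets $\varepsilon_1(S)=n+1$, and the hypothesis of Theorem~\ref{1} is \emph{not} satisfied.

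Correspondingly, Kempf's theorem \cite[Thm.~13]{Kem93} does not assert what you attribute to it. It is a statement about reductive group actions: given a form $h$ whose Jacobian ideal $J(h)$ is invariant under a reductive group $G$, there exists a $G$-invariant form $g$ with $J(g)=J(h)$. There is no group in sight in your argument, and no relation among the generators of $I$ follows. In the paper the cubic case is handled by going back \emph{inside} the proof of Theorem~\ref{1}: one assumes $\Der_\CC S$ is nonsolvable, extracts a copy $\ss\cong\sl_2(\CC)$ acting linearly on $\Bar V$, and then applies Kempf's theorem to the $\ss$-invariant space $J(\Bar f^3)\subseteq\Bar V^2$ to find an $\ss$-invariant cubic $\Bar g$ with the same Jacobian ideal. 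Analysing the possible $\ss$-module structures on $\Bar V$ (one may assume a single $2$-dimensional summand) then forces the highest-weight generator $\Bar g_1$ to involve at least a cube of the highest-weight variable, which sharpens inequality~\eqref{7} enough to reach the contradiction. This refinement of the \emph{proof} of Theorem~\ref{1}, rather than a second application of its \emph{statement}, is what is missing from your proposal.
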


\begin{rmk}
We do not understand the ``induction step'' in Yau's proof of Corollary~\ref{2} \cite[Thm.~2]{Yau91}: 
Yau considers the Taylor expansion $f=\sum_{i=k+1}^\infty f_i$ of $f$ and assumes $A(f)$ to be $\ss$-invariant for some $\sl_2(\CC)\cong\ss\subseteq\gl_n(\CC)$.
By hypothesis, $J(f)$ contains $\mm^m$ for some $m\ge k$ and, by finite determinacy, one can assume that $f$ is a polynomial of degree $d\le m$.
Then also the larger ideal $J(f_{k+1})+\cdots+J(f_d)$ of homogeneous parts of $J(f)$ contains $\mm^m$ and hence the quotient $(\mm^\ell+J(f_{k+1})+\cdots+J(f_\ell))/(\mm^{\ell+1}+J(f_{k+1})+\cdots+J(f_\ell))$ is zero for any $\ell\ge m$.
However, Yau identifies a subspace $J_\ell$ of this zero space with the vector space $\ideal{\p f_{\ell+1}/\p x_1,\dots,\p f_{\ell+1}/\p x_n}_\CC$ in order to conclude that the latter is $\ss$-invariant.
We do not see any reason why this space should be $\ss$-invariant in general for $\ell>k$.

However, for $\ell=k$, this invariance holds true and Yau's argument \cite[Thm.~1]{Yau91} using Kempf's result~\cite[Thm.~13]{Kem93} proves Corollary~\ref{2} in the homogeneous case (cf.~Remark~\ref{21}).
We shall use the same idea to prove the order $3$ case in general (cf.~Remark~\ref{12}).
In the higher order cases, we shall apply our main result Theorem~\ref{1}.
\end{rmk}

\section{Proofs}

For an ideal $J$ in an analytic algebra $S$, denote by $\Der_J S\subseteq\Der_\CC S$ the Lie subalgebra of all $\delta\in\Der_\CC S$ for which $\delta(J)\subseteq J$.
We shall first reformulate the claim of Theorem~\ref{1} in terms of the Lie algebra $\Der_{\Bar I}\Bar R$ where $\Bar R=R/\mm^{\ell+1}$, for sufficiently large $\ell\ge2$, and $\Bar I$ is the image of $I$ in $\Bar R$. 
To this end, we shall use the following result.

\begin{lem}\label{15}
For $J$ be an ideal in $R=\CC[\![x_1,\dots,x_n]\!]$.
Then there is a natural isomorphism of Lie algebras
\[
(\Der_JR)/(J\cdot\Der_\CC R)\cong\Der_\CC(R/J).
\]
\end{lem}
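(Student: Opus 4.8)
The plan is to construct the isomorphism explicitly and then check that it is well-defined, bijective, and compatible with the Lie brackets. First I would recall that since $R=\CC[\![x_1,\dots,x_n]\!]$, any $\CC$-derivation of $R$ has the form $\delta=\sum_{i=1}^n a_i\,\p/\p x_i$ with $a_i\in R$, so $\Der_\CC R$ is a free $R$-module on $\p/\p x_1,\dots,\p/\p x_n$. Given an ideal $J\subseteq R$, a derivation $\delta\in\Der_JR$ satisfies $\delta(J)\subseteq J$, hence descends to a $\CC$-derivation $\Bar\delta$ of $R/J$: one sets $\Bar\delta(r+J)=\delta(r)+J$, and this is well-defined precisely because $\delta(J)\subseteq J$. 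This gives a $\CC$-linear map $\Phi\colon\Der_JR\to\Der_\CC(R/J)$, and a direct check on products shows it is a homomorphism of Lie algebras (it is even a map of $R$-modules via the projection $R\onto R/J$, which yields bracket-compatibility since $[\delta_1,\delta_2]$ is computed the same way on both sides).

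Next I would identify the kernel and show surjectivity. For the kernel: $\Phi(\delta)=0$ means $\delta(r)\in J$ for all $r\in R$, equivalently $\delta(R)\subseteq J$, equivalently (writing $\delta=\sum a_i\,\p/\p x_i$ and testing on $r=x_i$) all coefficients $a_i$ lie in $J$, i.e.\ $\delta\in J\cdot\Der_\CC R$. Note that this also confirms $J\cdot\Der_\CC R\subseteq\Der_JR$, so the quotient makes sense; in fact $J\cdot\Der_\CC R$ is a Lie ideal of $\Der_JR$ because for $\delta\in\Der_JR$ and $a\in J$ one has $[\delta,a\,\p/\p x_i]=\delta(a)\,\p/\p x_i+a\,[\delta,\p/\p x_i]$, whose first term has coefficient $\delta(a)\in J$ and whose second term lies in $J\cdot\Der_\CC R$. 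For surjectivity: given $\eta\in\Der_\CC(R/J)$, choose for each $i$ a lift $a_i\in R$ of $\eta(x_i+J)\in R/J$ and set $\delta=\sum a_i\,\p/\p x_i$. Then $\delta$ and the composite $R\xrightarrow{\delta}R\onto R/J$ agree with $\eta$ after projection on each generator $x_i$, and since both are $\CC$-derivations and $R$ is (topologically) generated by the $x_i$, they agree on all of $R$; continuity/completeness is used here to pass from polynomials in the $x_i$ to arbitrary power series. In particular $\delta(J)=\eta(J+J)$-preimage $\subseteq J$ wait---more carefully, $\delta$ maps $R$ into the preimage of $\eta(R/J)$, and restricting to $J$ we get $\delta(J)\subseteq J$ because the composite kills $J$ by construction; hence $\delta\in\Der_JR$ and $\Phi(\delta)=\eta$.

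The main obstacle I anticipate is the completeness/continuity issue in the surjectivity step: one must be careful that a derivation determined on the $x_i$ really is determined on all of $R=\CC[\![x_1,\dots,x_n]\!]$, which requires that $\CC$-derivations of $R$ are continuous for the $\mm$-adic topology (equivalently, that $\Der_\CC R$ coincides with the ``continuous'' derivations). This is standard but should be invoked explicitly, perhaps via the fact that $\delta(\mm^k)\subseteq\mm^{k-1}$ for any $\delta\in\Der_\CC R$, so $\delta$ is automatically continuous and hence determined by its values on a topological generating set. The remaining verifications---well-definedness of $\Bar\delta$, the Lie algebra homomorphism property, and naturality in $R$ and $J$---are routine diagram chases. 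Putting these together, $\Phi$ induces the asserted isomorphism $(\Der_JR)/(J\cdot\Der_\CC R)\xrightarrow{\ \sim\ }\Der_\CC(R/J)$, and naturality is immediate from the explicit formula $\delta\mapsto\Bar\delta$.
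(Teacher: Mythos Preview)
Your proof is correct and follows essentially the same plan as the paper: define the natural map $\Der_JR\to\Der_\CC(R/J)$, identify its kernel as $J\cdot\Der_\CC R$ using that $\Der_\CC R$ is free on the $\p/\p x_i$, and then establish surjectivity by lifting derivations. The one substantive difference is in the surjectivity step: the paper simply invokes a lifting result of Scheja--Wiebe \cite[(2.1)]{SW73}, whereas you argue directly by choosing lifts $a_i$ of $\eta(\Bar x_i)$, setting $\delta=\sum a_i\,\p/\p x_i$, and using the Leibniz rule together with $\mm$-adic continuity to see that $\delta$ really induces $\eta$. Your route is more elementary and self-contained; the paper's is shorter. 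One detail you should state explicitly in your continuity argument is that the target $R/J$ is $\mm$-adically separated (Krull's intersection theorem for the Noetherian local ring $R/J$), since that is what lets ``agree on polynomials plus continuous'' force equality on all of $R$.
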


\begin{proof}
By definition, there is a map $\varphi\colon\Der_JR\to\Der_\CC(R/J)$ whose kernel contains $J\cdot\Der_\CC R$.
Note that $\Der_\CC R$ is a free $R$-module with basis $\p/\p x_1,\dots,\p/\p x_n$ and that the coefficient of $\p/\p x_i$ in $\delta\in\Der_\CC R$ is $\delta(x_i)$.
So if $\delta\in\ker\varphi$, then $\delta(x_i)\in J$ and hence $\delta\in J\cdot\Der_\CC R$.
This proves injectivity.
By a result of Scheja and Wiebe \cite[(2.1)]{SW73}, any $\Bar\delta\in\Der_\CC(R/J)$ lifts to a $\delta\in\Der_\CC R$ which is then necessarily in $\Der_JR$.
This proves surjectivity and the claim follows.
\end{proof}

Consider now the situation of Theorem~\ref{1}.
We shall use the notation introduced in the paragraph following Theorem~\ref{1}.
Since $I$ is $\mm$-primary by hypothesis,
\begin{equation}\label{16}
\mm^\ell\subseteq I,\text{ for some }\ell\ge2,
\end{equation}
and we set
\begin{equation}\label{18}
\Bar I:=I/\mm^{\ell+1}\subseteq\Bar\mm:=\mm/\mm^{\ell+1}\subseteq\Bar R:=R/\mm^{\ell+1}.
\end{equation}
Note that
\begin{equation}\label{11}
\Bar I/(\Bar\mm\cdot\Bar I)\cong I/(\mm\cdot I)
\end{equation}
and hence $\Bar I$ has the same minimal number of generators as $I$. 
As $\mm$ is an associated prime of $I$, it follows from a result of Scheja and Wiebe \cite[(2.5)]{SW73} that
\begin{equation}\label{3}
\Der_IR\subseteq\Der_\mm R.
\end{equation} 
Using the Leibniz rule and \cite[(2.5)]{SW73} again, one shows that
\begin{equation}\label{9}
\Der_\mm R=\Der_{\mm^i}R,\text{ for all }i\ge2.
\end{equation}
Using \cite[Ch.~I, \S7, Lem.]{Jac79}, the following result reduces the claim of Theorem~\ref{1} to prove solvability of $\Der_{\Bar I}\Bar R$.

\begin{lem}\label{17}
$\Der_\CC(S)$ a subquotient of $\Der_{\Bar I}\Bar R$.
\end{lem}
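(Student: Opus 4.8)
The plan is to exhibit $\Der_\CC(S)$ as a subquotient of $\Der_{\Bar I}\Bar R$ by combining Lemma~\ref{15} with a comparison between derivations of $R/I$ and derivations of $\Bar R/\Bar I$. First I would observe that since $\mm^\ell\subseteq I$ by \eqref{16}, we have $I\supseteq\mm^{\ell+1}$ as well, so $S=R/I=\Bar R/\Bar I$ canonically (the extra quotient by $\mm^{\ell+1}$ changes nothing). Thus it suffices to compare $\Der_\CC(\Bar R/\Bar I)$ with $\Der_{\Bar I}\Bar R$. Applying Lemma~\ref{15} is not immediate because $\Bar R$ is not a power series ring; however, the same argument works, since by \eqref{3} and \eqref{9} every $\delta\in\Der_IR$ preserves $\mm^{\ell+1}$ and hence descends to $\Bar R$, preserving $\Bar I$, giving a restriction map $\Der_IR\to\Der_{\Bar I}\Bar R$. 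Composing with $\varphi\colon\Der_IR\onto\Der_\CC(R/I)$ from Lemma~\ref{15} and using $R/I=\Bar R/\Bar I$, one gets a surjection from the image of $\Der_IR$ in $\Der_{\Bar I}\Bar R$ onto $\Der_\CC(S)$. That image is a Lie subalgebra of $\Der_{\Bar I}\Bar R$, and the induced map onto $\Der_\CC(S)$ is a Lie algebra homomorphism, so $\Der_\CC(S)$ is a quotient of a subalgebra, i.e.\ a subquotient.

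The key technical point is to verify that the restriction map $\Der_IR\to\Der_\CC(\Bar R)$ is well defined and that its image lands in $\Der_{\Bar I}\Bar R$: this follows because $\delta\in\Der_IR$ satisfies $\delta(\mm)\subseteq\mm$ by \eqref{3}, hence $\delta(\mm^{\ell+1})\subseteq\mm^{\ell+1}$ by the Leibniz rule (this is essentially \eqref{9}), so $\delta$ descends to $\Bar R=R/\mm^{\ell+1}$; and $\delta(I)\subseteq I$ together with the surjection $I\onto\Bar I$ gives $\Bar\delta(\Bar I)\subseteq\Bar I$. Then I would invoke \cite[Ch.~I, \S7, Lem.]{Jac79}, as the authors signal, to conclude that solvability of $\Der_{\Bar I}\Bar R$ implies solvability of any subquotient, in particular of $\Der_\CC(S)$; this reduces Theorem~\ref{1} to proving $\Der_{\Bar I}\Bar R$ solvable.

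I expect the main obstacle to be purely bookkeeping: making sure the identification $R/I\cong\Bar R/\Bar I$ is used correctly and that the composite map is genuinely a Lie algebra homomorphism onto $\Der_\CC(S)$ rather than merely a linear surjection — but since all maps involved (restriction of derivations, the projection $\varphi$ of Lemma~\ref{15}) are Lie homomorphisms, this is automatic. One should also double-check that surjectivity of $\varphi$, which rests on the Scheja--Wiebe lifting result \cite[(2.1)]{SW73}, indeed passes through: every $\Bar\delta\in\Der_\CC(S)$ lifts to some $\delta\in\Der_IR$, and the restriction of this $\delta$ to $\Bar R$ is a preimage in the image subalgebra, so the map onto $\Der_\CC(S)$ is surjective. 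No deep input is needed here beyond Lemma~\ref{15} and the already-cited structural facts \eqref{3} and \eqref{9}.
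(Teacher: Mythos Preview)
Your proposal is correct and follows essentially the same route as the paper. The paper phrases the injection $(\Der_IR)/(\mm^{\ell+1}\cdot\Der_\CC R)\hookrightarrow\Der_{\Bar I}\Bar R$ as the restriction of the isomorphism $(\Der_\mm R)/(\mm^{\ell+1}\cdot\Der_\CC R)\cong\Der_\CC\Bar R$ obtained from Lemma~\ref{15} applied to $J=\mm^{\ell+1}$, whereas you build the restriction map $\Der_IR\to\Der_{\Bar I}\Bar R$ directly; combined with the surjection $\Der_IR\onto\Der_\CC(R/I)$ from Lemma~\ref{15}, both arguments yield the same subquotient description, and the only point you leave implicit---that the kernel $\mm^{\ell+1}\cdot\Der_\CC R$ of the restriction map is contained in $I\cdot\Der_\CC R=\ker\varphi$---is immediate from $\mm^{\ell+1}\subseteq I$.
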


\begin{proof}
Using~\eqref{9}, Lemma~\ref{15} yields a natural isomorphism
\begin{equation}\label{14}
(\Der_\mm R)/(\mm^{\ell+1}\cdot\Der_\CC R)\cong\Der_\CC\Bar R.
\end{equation}
By the choice of $\ell$ and \eqref{3},
\[
\mm^{\ell+1}\cdot\Der_\CC R\subseteq\mm^\ell\cdot\Der_\CC R\subseteq\Der_IR\subseteq\Der_\mm R
\]
and hence \eqref{14} induces an injection 
\begin{equation}\label{5}
(\Der_IR)/(\mm^{\ell+1}\cdot\Der_\CC R)\into\Der_{\Bar I}\Bar R.
\end{equation}
Lemma~\ref{15} also yields an isomorphism
\begin{equation}\label{6}
(\Der_IR)/(I\cdot\Der_\CC R)\cong\Der_\CC(R/I).
\end{equation}
Combining \eqref{5} and \eqref{6} makes $\Der_\CC(R/I)$ a subquotient of $\Der_{\Bar I}\Bar R$ as claimed.
\end{proof}

In order to apply the reduction given by Lemma~\ref{17}, we shall need \eqref{3} and \eqref{9} also for $R$ replaced by $\Bar R$.
By \cite[(2.5)]{SW73}, any $\delta\in\Der_{\Bar I}\Bar R$ lifts to a $\delta'\in\Der_{\CC} R$. 
By \eqref{16}, \eqref{18}, and \eqref{3}, $\delta'\in\Der_I R=\Der_\mm R$ and hence $\delta\in\Der_{\Bar\mm}\Bar R$.
Thus, 
\begin{equation}\label{20}
\Der_{\Bar I}\Bar R\subseteq\Der_{\Bar\mm}\Bar R
\end{equation}
analogous to \eqref{3}, and, using the the Leibnitz rule, one deduces
\begin{equation}\label{4}
\Der_{\Bar I}\Bar R\subseteq\Der_{\Bar\mm\cdot\Bar I}\Bar R.
\end{equation}
The analogue of \eqref{9} is proved similarly and reads
\begin{equation}\label{19}
\Der_\CC\Bar R=\Der_{\Bar\mm^i}\Bar R,\text{ for all }i\ge1.
\end{equation}

After these preparations we are ready to finish the 
\begin{proof}[Proof of Theorem~\ref{1}]
Assume that $\Der_\CC S$ is not solvable which implies, by Lemma~\ref{17}, that $\Der_{\Bar I}\Bar R$ is not solvable.
By Levi's theorem \cite[Ch.~III, \S9, Levi's thm.]{Jac79} and the structure of semisimple Lie algebras \cite[Ch.~IV, \S3]{Jac79}, there is a Lie subalgebra
\[
\sl_2(\CC)\cong\ideal{H,X,Y}=\ss\subseteq\Der_{\Bar I}\Bar R
\]
where $H,X,Y$ are standard generators subject to the relations
\[
[H,X]=2X,\quad[H,Y]=-2Y,\quad[X,Y]=H.
\]
By \eqref{20}, \eqref{19} for $i=2$, and complete reducibility \cite[Ch.~III, \S7, Thm.~8]{Jac79}, we can assume that the $\CC$-vector space $\Bar V=\ideal{\Bar x_1,\dots,\Bar x_n}_\CC$ spanned by the $\Bar x_i=x_i+\mm^{\ell+1}\in\Bar R$ is an $\ss$-invariant vector space.
This means that the representation of the Lie algebra $\ss$ on $\Bar R$ is linear with respect to the coordinates $\Bar x_1,\dots,\Bar x_n$.
By the same argument using \eqref{4} instead of \eqref{9} for $i=2$, we can find an $\ss$-invariant vector space $\Bar F\subseteq\Bar  I$ that maps isomorphically onto $\Bar I/(\Bar\mm\cdot\Bar I)$.
In other words, a basis of $\Bar F$ is a minimal set of generators of $\Bar I$.
Then, by \eqref{11} and according to the interpretation following Theorem~\ref{1}, we have to show that
\begin{equation}\label{10}
\dim_\CC\Bar F\ge m+n-1.
\end{equation}

Now, again by complete reducibility and by the classification of irreducible modules \cite[Ch.~VII, \S3]{Jac79}, $\Bar V$ decomposes into irreducible $\ss$-modules each of which is generated by a highest weight vector, which means an $H$-homogeneous vector in $\ker X$.
We may assume that $\Bar x_1,\dots,\Bar x_k$ are these highest weight vectors, that is, $X(x_i)=0$ for $i=1,\dots,k$, and that their $H$-weights form a nonincreasing sequence, that is, $H(x_1)/x_1\ge\cdots\ge H(x_k)/x_k$.
Moreover, we can choose the remaining variables $\Bar x_{k+1},\dots,\Bar x_n$ to be $H$-homogeneous and compatible with the decomposition of $\Bar V$ into irreducible $\ss$-modules, that is, $X$ and $Y$ map variables to multiples of variables.
Since $I$ is zero dimensional,
\[
(I+\ideal{x_{i+1},\dots,x_n})/\ideal{x_{i+1},\dots,x_n}\subseteq R/\ideal{x_{i+1},\dots,x_n}\cong\CC[\![x_1,\dots,x_i]\!]
\]
is a zero dimensional ideal in an $i$-dimensional power series ring.
So it must have at least $i$ generators.
By the analogue of \eqref{11} with $R$ replaced by $R/\ideal{x_{i+1},\dots,x_n}$, the same holds for $(\Bar I+\ideal{\Bar x_{i+1},\dots,\Bar x_n})/\ideal{\Bar x_{i+1},\dots,\Bar x_n}$ which is generated by the image of $\Bar F$ in $\Bar R/\ideal{\Bar x_{i+1},\dots,\Bar x_n}$.
Therefore we can associate to each $i=1,\dots,k$ an $H$-homogeneous element $\Bar f_i\in\Bar F$ such that $\Bar f_1,\dots,\Bar f_i$ are linearly independent modulo $\ideal{\Bar x_{i+1},\dots,\Bar x_n}$.
These elements generate an $H$-homogeneous $k$-dimensional vector space 
\[
\Bar F'=\ideal{\Bar f_1,\dots,\Bar f_k}_\CC.
\]
As $\Bar x_1,\dots,\Bar x_k$ are highest weight vectors in $\Bar V$ and hence in $\Bar R$, the subring $\CC[\Bar x_1,\dots,\Bar x_k]\subseteq\Bar R$ consists of highest weight vectors by the Leibnitz rule.
Thus,
\begin{equation}\label{8}
\Bar g_i(\Bar x_1,\dots,\Bar x_k)=\Bar f_i(\Bar x_1,\dots,\Bar x_k,0,\dots,0),\text{ for }i=1,\dots,k,
\end{equation}
are linearly independent highest weight vectors in $\Bar R$ and
\[
\Bar G=\ideal{\Bar g_1,\dots,\Bar g_k}_\CC
\]
is an $H$-homogeneous vector space.
Any highest weight vector outside of $\CC[\Bar x_1,\dots,\Bar x_k]$ can be chosen to lie in the ideal $\ideal{\Bar x_{k+1},\dots,\Bar x_n}\subseteq\Bar R$.
As the latter is stable by the Borel algebra $\ideal{H,Y}_\CC$, it contains also the $\ss$-module generated by such a vector.
This shows that there is a projection of $\ss$-modules
\[
\Bar F\supseteq\ss\cdot\Bar F'\onto\ss\cdot\Bar G.
\]
So in order to find a lower bound for $\dim_\CC\Bar F$ such as \eqref{10}, we may as well assume that $\Bar f_i=\Bar g_i$ for $i=1,\dots,k$.

Recall the hypothesis $I\subseteq\mm^m$, $m\ge2$, which implies $\Bar I\subseteq\Bar\mm^m$, and denote by $m_i$ the $H$-weight of $\Bar x_i$.
Then $\Bar g_i$ has $H$-weight at least $m\cdot m_i$ and hence $\ss\cdot\Bar g_i$ has dimension at least $m\cdot m_i+1$.
Note that the dimension of $\Bar V$ is $n=m_1+1+\cdots+m_k+1$.
It follows from the preceding arguments that
\begin{equation}\label{7}
\dim_\CC\Bar F\ge(m-1)\cdot(m_1+\cdots+m_k)+n.
\end{equation}
As $\ss\subseteq\gl_n(\CC)$, at least one of the irreducible $\ss$-modules in $\Bar V$ must be nontrivial which means that $m_i\ge1$ for some $i\in\{1,\dots,k\}$.
It follows that $\dim_\CC\Bar F\ge m+n-1$ as claimed.
This finishes the proof of Theorem~\ref{1}.
\end{proof}

\begin{rmk}\label{21}
In the setting of reductive groups of automorphisms of analytic algebras, an invariant set of minimal generators such as $\Bar F$ in the proof of Theorem~\ref{1} has been constructed by M\"uller \cite[Hilfssatz~2]{Mue86}. 
\end{rmk}

Finally we give the 
\begin{proof}[Proof of Corollary~\ref{2}]
By a remark of Yau~\cite[\S2 Rem.]{Yau91}, one can reduce to the case where $f\in\mm^3$.
Theorem~\ref{1} applied to $I=\ideal{f,J(f)}$ yields the claim 
\begin{itemize}
\item if $\ord(X)\ge4$, 
\item if $X$ is quasihomogeneous and hence $I=J(f)$, or
\item if $\Bar V$ contains a $3$-dimensional $\ss$-module using \eqref{7}.
\end{itemize}
So we may assume that $f\in\mm^3\setminus\mm^4$ and that $\Bar V$ contains exactly one $2$-dimensional and $n-2$ many $1$-dimensional irreducible $\ss$-modules, say $\ideal{\Bar x_1,\Bar x_n}_\CC$ and $\ideal{\Bar x_2}_\CC,\dots,\ideal{\Bar x_{n-1}}_\CC$.
In particular, the $H$-weights of $\Bar x_1,\Bar x_2,\dots,\Bar x_{n-1},\Bar x_n$ equal $m_1=1,m_2=0,\dots,m_k=0,-1$.
Denote by $\Bar f^3\in\Bar V^3$ the homogeneous part of degree $3$ of $\Bar f=f+\mm^{\ell+1}\in\Bar R$ with respect to $\Bar x_1,\dots,\Bar x_n$.
Then $J(\Bar f^3)\subseteq\Bar V^2$ is an $\ss$-module.
By a result of Kempf~\cite[Thm.~13]{Kem93}, there is an $\ss$-invariant polynomial $\Bar g\in\Bar V^3$ such that $J(\Bar g)=J(\Bar f^3)$.
In particular, $\Bar g$ must be $H$-homogeneous of weight $0$ and hence in the span of $x_1\cdot\ideal{x_2,\dots,x_{n-1}}_\CC\cdot x_n$ and $\ideal{x_2,\dots,x_{n-1}}_\CC^3$.
But then $X(\Bar g)=0=Y(\Bar g)$ forces $\Bar g$ and hence $J(\Bar f^3)$ to be independent of $\Bar x_1$ and $\Bar x_n$.
This shows that $\Bar g_1$ in \eqref{8} involves at least a third power of $\Bar x_1$.
Thus, the inequality \eqref{7} can be improved by setting $m=3$ which suffices to conclude Corollary~\ref{2}.
\end{proof}

\begin{rmk}\label{12}
We do not know how to avoid the rather deep result of Kempf in the proof of the order $3$ case in Corollary~\ref{2}.
\end{rmk}

\bibliographystyle{amsalpha}
\bibliography{dfp}
\end{document}